\newtheorem{theorem}{Theorem}[section]
\newtheorem{lemma}[theorem]{Lemma}
\newtheorem{proposition}[theorem]{Proposition}
\theoremstyle{definition}
\newtheorem{definition}[theorem]{Definition}
\newtheorem{problem}[theorem]{Problem}
\theoremstyle{remark}
\numberwithin{equation}{section}
\numberwithin{equation}{section}
\begin{document}

\title{2-Local Derivations on Some $ C^* $-Algebras}

%    Remove any unused author tags.

%    author one information
\author{Meysam Habibzadeh Fard}
\address{Department of Mathematics, \\  
University of Guilan, \\
 Guilan, \\ 
  Iran}
\curraddr{}
\email{mhabibzadeh@phd.guilan.ac.ir, meysam.habibzadeh@icloud.com}  
\thanks{}

%    author two information
\author{Abbas Sahleh}
\address{Department of Mathematics, \\
University of Guilan, \\ 
 Guilan, \\ 
  Iran }
\curraddr{}
\email{sahlehj@gmail.com,  sahlehjj@guilan.ac.ir}
\thanks{}

\subjclass[2010]{Primary 47B47; Secondary 46Lxx;  Tertiary 43Axx}

\keywords{(Approximately) 2-local derivations;   faithful normal semi-finite traces; tracial states;  $ \tau $-open projections.}

\date{20 November 2017}

\dedicatory{}

\begin{abstract}
 In this paper, we introduce the concept of trace-open projections in the second dual $ \mathcal{A}^{**} $ of a $ C^* $-algebra
  $ \mathcal{A} $, and we show that if there  is a faithful normal semi-finite trace  $ \tau $  on $ \mathcal{A}^{**} $, and 
  $ 1_{ \mathcal{A}^{**} } $ is a $ \tau $-open projection, then each 2-local  derivation $ \Delta $ from $ \mathcal{A} $ to $ \mathcal{A}^{**} $  is an inner derivartion. We also show that this theorem hold for approximately 2-local derivations when $ \mathcal{A}^{**} $ is a finite von neumann algebra. 
\end{abstract}

\maketitle

\section{Introduction}

   The notion of 2-local derivations introduced by Semrl in \cite{Semrl}, he proved that if
    $ \mathcal{H} $ is an infinite dimensional separable hilbert space, then every 2-local derivation on $\mathcal{B(H)}$ is a derivation. In \cite{Kim1}, Kim and Kim proved that every 2-local derivation on $ M_n(\mathbb{C}) $ is a derivation, this implies that if $ \mathcal{H} $ is a finite dimensional hilbert space, then every 2-local derivation on $\mathcal{B(H)}$ is a derivation.  
In \cite{Ayupov1}, Ayupov and Kudaybergenov proved that every 2-local derivation on $ \mathcal{B(H)} $ is a derivation for arbitrary Hilbert space $ \mathcal{H}$.  
In \cite{Ayupov2}, they also proved that each 2-Local derivation on a semi-finite von Neumann algebra is a derivation and finally in \cite{Ayupov3}, authors showed that this result hold for arbitrary von Neumann algebras.

In \cite{Kim2}, Kim and Kim proved that every norm continuous 2-local derivation on AF $ C^* $-algebra is a derivation. In this paper we survey about 2-local derivations of $ C^* $-algebra into it's second dual.

Section 2  is about the preliminaries.

In Section 3, we define the concept of trace-open projections of a $ C^* $-algebra $ \mathcal{A} $, and we show that if $ \tau $ is a faithful normal semi-finite trace on $ \mathcal{A}^{**} $ and $ 1_{\mathcal{A}^{**}} $  is a $ \tau $-open projection, then each 2-local derivation 
$ \Delta $ from $ \mathcal{A} $ into $ \mathcal{A}^{**} $ is a derivation. We also show that if $ C^* $-algebra $ \mathcal{A} $ is an ideal on it's second dual, then each 2-local derivation on $ \mathcal{A} $ is a derivation.

 In Section 4, we show that if the second dual of a $ C^* $-algebra $ \mathcal{A} $ is finite as a von Neumann algebra, then each approximately 2-local derivation 
$ \Delta $ from $ \mathcal{A} $ into $ \mathcal{A}^{**} $ is a derivation.

\section{Preliminaries}

Let $ \mathcal{A} $ be a Banach algebra and let $ E $ be a Banach $ \mathcal{A} $-bimodule.  A (bounded) linear map $ D : \mathcal{A} \longrightarrow E $ is a (bounded) derivation if
$$
 D(ab) = a \cdot Db + Da\cdot b \hspace{.5cm}(a,b \in \mathcal{A}).
$$
In the case that $ E = \mathcal{A} $, the derivation $ D $ is called a derivation on $ \mathcal{A} $.  
For $ a \in E $, put $ \delta_{a}(x) = [ a, x ] = a\cdot x - x\cdot a $, $ ( x \in \mathcal{A} ) $, then $ \delta_{a} $ is a bounded derivation. Such a derivation is called an inner derivation.   
 A Jordan derivation from a Banach algebra $ \mathcal{A} $ into a Banach $ \mathcal{A} $-bimodule $ E $ is a linear map $ D $ satisfying $ D(a^2) = a \cdot D(a) + D(a) \cdot a$ ($a \in \mathcal{A} $).  
 
 The following statements hold:  
 \begin{itemize}  
% \item  Each derivation $ \delta $ on a $ C^* $-algebra $ \mathcal{A} $ is bounded. (\cite{Sakai}, Lemma 4.1.3).
 \item  Each  derivation $ \delta $ from a $ C^* $-algebra $ \mathcal{A} $ to a Banach $ \mathcal{A} $-bimodule $ E $ is bounded (\cite{Ringrose}, theorem 2). 
 \item  Each derivation $ \delta $ on a von Neumann algebra $ \mathcal{A} $ is inner (\cite{Sakai}, Theorem 4.1.6).
% \item  Each  derivation $ \delta $ on $ C^* $-algebra $ \mathcal{A} $ is spatial, i.e. there is an element $ x \in \mathcal{A}^{**} $, such that $  \delta(a)=[a,x] $ ( for all $ a\in \mathcal{A} $)  (\cite{Sakai}, Corollary 4.1.7 ).   
 \item Each bounded Jordan derivation $ D $ from a $ C^{*} $-algebra $ \mathcal{A} $ into a Banach $ \mathcal{A} $-bimodule $ E $ is a  derivation (\cite{Johnson2}, Theorem 6.3).
 \item Every Jordan derivation $ D $ from a $ C^{*} $-algebra $ \mathcal{A} $ into a Banach $ \mathcal{A} $-bimodule $ E $ is  bounded
   ( \cite{Russo},  Corollary 17 ).  
 \end{itemize}

%--------------------------------------% 

  Let $ \mathcal{A} $ be a Banach algebra and let $ \mathcal{A}^{*} $ and $ \mathcal{A}^{**} $ be its dual and its second dual spaces, respectively.  For $ x, y \in \mathcal{A} $, $ f \in \mathcal{A}^{*} $ and $ F, G \in \mathcal{A}^{**} $, define: 
  \begin{align*} 
 (F \mdsmwhtsquare G)(f) := F(G \star f),& \hspace{.5cm} (F \mdwhtdiamond G)(f) := G(f \star F)  \\ 
where \hspace{1cm} (G \star f)(x) := G(f \cdot x),& \hspace{.5cm}  (f \star F)(x) := F(x \cdot f)   \\
and \hspace{1.5cm} (f \cdot x)(y) := f(xy) ,& \hspace{.5cm}  (x \cdot f)(y) := f(yx)      
  \end{align*}  
  Then $ F \mdsmwhtsquare G,\hspace{.1cm} F \mdwhtdiamond G \in \mathcal{A}^{**} $, $ G \star f,\hspace{.1cm} f \star F  \in \mathcal{A}^{*} $ and $ f \cdot x,\hspace{.1cm} x \cdot f  \in \mathcal{A}^{*} $.    
The algebra $ \mathcal{A} $ is called Arens regular if the products $ \mdsmwhtsquare $  and $ \mdwhtdiamond $ coincide on $ \mathcal{A}^{**} $. 
 It is known that, every $ C^{*} $-algebra $ \mathcal{A} $ is Arens regular, and its second dual $ \mathcal{A}^{**} $ with multiplication $ \mdsmwhtsquare $,  is a von Neumann algebra
 (\cite{Dales2}, Corollary 3.2.37).  
 From now on, we  avoid writing Arens product $ \mdsmwhtsquare $ on  $ \mathcal{A}^{**} $ ( the second dual of a $ C^{*} $-algebra $ \mathcal{A} $ ).

 \begin{lemma} % Lemma 2.1
 Each derivation $ D $ from a $ C^{*} $-algebra $ \mathcal{A} $ into its second dual $ \mathcal{A}^{**} $ is inner.  
 \end{lemma}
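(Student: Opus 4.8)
The plan is to exploit the fact that $\mathcal{A}^{**}$ is a von Neumann algebra together with the extension of $D$ to a normal derivation on $\mathcal{A}^{**}$. First I would recall that by the first bullet point above (Ringrose), any derivation $D : \mathcal{A} \longrightarrow \mathcal{A}^{**}$ is automatically bounded. Since $\mathcal{A}^{**}$ is a dual Banach space, the bidual map $D^{**} : \mathcal{A}^{**} \longrightarrow \mathcal{A}^{****}$ followed by the canonical projection $\mathcal{A}^{****} \longrightarrow \mathcal{A}^{**}$ (arising from $\mathcal{A}^{**}$ being a dual space, or equivalently from Arens regularity of $\mathcal{A}$) yields a weak$^*$-continuous (normal) linear extension $\tilde{D} : \mathcal{A}^{**} \longrightarrow \mathcal{A}^{**}$ of $D$.

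The next step is to check that $\tilde{D}$ is again a derivation. This is the standard argument: for fixed $a \in \mathcal{A}$, the maps $F \mapsto \tilde{D}(aF) - a\tilde{D}(F) - \tilde{D}(a)F$ and similar expressions are weak$^*$-continuous in $F$ and vanish on the weak$^*$-dense subalgebra $\mathcal{A}$, hence vanish identically; then one fixes $F \in \mathcal{A}^{**}$ and runs the same density-plus-continuity argument in the second variable. Here one must be a little careful about separate versus joint weak$^*$-continuity of multiplication on $\mathcal{A}^{**}$, but separate weak$^*$-continuity of the product on a von Neumann algebra is exactly what is needed and is available. The upshot is that $\tilde{D}$ is a derivation of the von Neumann algebra $\mathcal{A}^{**}$ into itself.

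Now I invoke the second bullet point above (Sakai, Theorem 4.1.6): every derivation on a von Neumann algebra is inner. Hence there is $F_0 \in \mathcal{A}^{**}$ with $\tilde{D}(F) = [F_0, F] = F_0 F - F F_0$ for all $F \in \mathcal{A}^{**}$. Restricting to $\mathcal{A}$ gives $D(a) = \delta_{F_0}(a) = F_0 \cdot a - a \cdot F_0$ for all $a \in \mathcal{A}$, where the module actions of $\mathcal{A}$ on $\mathcal{A}^{**}$ are the restrictions of the algebra multiplication; this is precisely the statement that $D$ is an inner derivation implemented by the element $F_0 \in \mathcal{A}^{**}$.

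The main obstacle — really the only point that needs genuine care rather than bookkeeping — is verifying that the normal extension $\tilde{D}$ is multiplicative as a derivation, i.e. managing the weak$^*$-continuity arguments in both variables and making sure the Arens product used to define the $\mathcal{A}^{**}$-bimodule structure matches the von Neumann algebra product (which is legitimate here because $C^*$-algebras are Arens regular, as noted in the preliminaries). Once that is in place, boundedness (Ringrose) and Sakai's theorem finish the proof immediately.
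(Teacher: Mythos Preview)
Your proposal is correct and follows essentially the same approach as the paper: extend $D$ to a derivation $\widetilde{D}$ on the von Neumann algebra $\mathcal{A}^{**}$, apply Sakai's theorem to conclude $\widetilde{D}$ is inner, and restrict back to $\mathcal{A}$. The only difference is that the paper outsources the extension step to a reference (Dales, Proposition 2.7.17, using Arens regularity), whereas you spell out the construction and the weak$^*$-continuity verification explicitly; this is extra detail, not a different strategy.
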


 \begin{proof}
By Arens regularity of $ C^{*} $-algebra $ \mathcal{A} $, each  derivation $ D $ from  $ \mathcal{A} $ to $ \mathcal{A}^{**}  $  can be extended to a derivation $ \widetilde{D}:\mathcal{A}^{**}\longrightarrow \mathcal{A}^{**} $  (see \cite{Dales2}, Proposition 2.7.17).  Since $ \mathcal{A}^{**} $ is a von Neumann algebra, there is an element $ a \in \mathcal{A}^{**} $ such that $ \widetilde{D}(x) = [a, x] $  (\cite{Sakai}, Theorem 4.1.6) and therefore 
 $$
 D(x) = \widetilde{D}(x)|_{\mathcal{A}} = [a, x],  \hspace{.5cm}  ( x \in \mathcal{A} ). 
 $$
 \end{proof}

   A mapping $ \Delta $ from a Banach algebra $ \mathcal{A} $ into a Banach $ \mathcal{A} $-bimodule $ E $  is a bounded 2-local (respectively, approximately 2-local) derivation, if for each $ a, b \in \mathcal{A} $, there is a bounded derivation $D_{a,b}$ (respectively, a sequence of bounded derivations $\{D^{n}_{a,b}\}$) from $ \mathcal{A} $ into $ E $ such that $D(a) = D_{a,b}(a)$ and 
   $D(b) = D_{a,b}(b)$ (respectively $D(a) = \lim_{n\longrightarrow\infty} D^{n}_{a,b}(a)$ and 
     $D(b) = \lim_{n\longrightarrow\infty} D_{a,b}^{n}(b)$).

It follows by definition that every (bounded) 2-local derivation $ \Delta $ is a (bounded) approximately 2-local derivation.

%--------------------------------------% 

\begin{lemma} % Lemma 2.2
Let $ \Delta $ be a 2-local ( or an approximately 2-local ) derivation of a Banach algebra %( respectivly, $ C^* $-algebra )
 $ \mathcal{A} $ into a Banach $ \mathcal{A} $-bimodule $ E $. then
\begin{itemize}
\item[(i)]    $ \Delta(\lambda x) = \lambda \Delta(x) $ for any $ \lambda \in C $, and $ x \in \mathcal{A} $. 
\item[(ii)]   $ \Delta(x^{2}) =  \Delta(x)x + x\Delta(x) $ for any  $ x \in \mathcal{A} $.
\end{itemize}
\end{lemma}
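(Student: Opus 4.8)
The plan is to exploit, for each fixed $x$, a well-chosen second argument $b$ so that the local derivation $D_{x,b}$ (or the approximating sequence $\{D^{n}_{x,b}\}$) is forced to reproduce the desired algebraic identity on the element in question. No structure theory is needed here; everything follows from the defining property of $\Delta$ together with the fact that every derivation is linear and satisfies the Leibniz rule.

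For (i), fix $x \in \mathcal{A}$ and $\lambda \in \mathbb{C}$ and apply the definition to the pair $a = x$, $b = \lambda x$. In the 2-local case there is a bounded derivation $D_{x,\lambda x}$ with $\Delta(x) = D_{x,\lambda x}(x)$ and $\Delta(\lambda x) = D_{x,\lambda x}(\lambda x)$; since $D_{x,\lambda x}$ is linear, $D_{x,\lambda x}(\lambda x) = \lambda D_{x,\lambda x}(x) = \lambda \Delta(x)$. In the approximately 2-local case replace $D_{x,\lambda x}$ by the sequence $\{D^{n}_{x,\lambda x}\}$ and pass to the limit, using continuity of scalar multiplication: $\Delta(\lambda x) = \lim_{n} D^{n}_{x,\lambda x}(\lambda x) = \lambda \lim_{n} D^{n}_{x,\lambda x}(x) = \lambda \Delta(x)$. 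For (ii), fix $x \in \mathcal{A}$ and apply the definition to the pair $a = x$, $b = x^{2}$. In the 2-local case the bounded derivation $D = D_{x,x^{2}}$ satisfies $\Delta(x) = D(x)$ and, by the Leibniz rule, $\Delta(x^{2}) = D(x^{2}) = D(x)\cdot x + x\cdot D(x) = \Delta(x)\cdot x + x\cdot\Delta(x)$. In the approximately 2-local case, $\Delta(x^{2}) = \lim_{n} D^{n}_{x,x^{2}}(x^{2}) = \lim_{n}\bigl(D^{n}_{x,x^{2}}(x)\cdot x + x\cdot D^{n}_{x,x^{2}}(x)\bigr)$, and since $\lim_{n} D^{n}_{x,x^{2}}(x) = \Delta(x)$, the right-hand side converges to $\Delta(x)\cdot x + x\cdot\Delta(x)$.

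There is essentially no serious obstacle: both parts are immediate once the correct test pair is selected. The only point that deserves a word of care is the passage to the limit in the approximately 2-local case, and this is harmless because for the fixed element $x$ the left and right module actions $e \mapsto x\cdot e$ and $e \mapsto e\cdot x$ are bounded linear maps on the Banach $\mathcal{A}$-bimodule $E$, hence norm-continuous.
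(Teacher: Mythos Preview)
Your proof is correct and follows essentially the same approach as the paper: for each part you select the natural test pair $(x,\lambda x)$ or $(x,x^{2})$, apply linearity or the Leibniz rule to the local (sequence of) derivation(s), and in the approximate case pass to the limit. The paper writes out only the approximately 2-local case explicitly (the 2-local case being the obvious simplification), and your added remark on norm-continuity of the module actions merely makes explicit what the paper leaves tacit.
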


\begin{proof}
We prove this lemma only for approximately 2-local derivations on Banach algebras. 
\begin{itemize} 
\item[(i)]  For each $ x \in \mathcal{A} $ and $ \lambda \in C $,  there exists a sequence of derivations
  $ \{ D_{n}^{x,\lambda x} \} $ such that 
  
  $$   \Delta(x) = \lim_{n\longrightarrow \infty}D_{n}^{x,\lambda x}(x), 
    \Delta(\lambda x) = \lim_{n\longrightarrow \infty}D_{n}^{x,\lambda x}(\lambda x) $$
  so
$$
 \Delta(\lambda x)  = \lim_{n\longrightarrow \infty}D_{n}^{x,\lambda x}(\lambda x) 
        = \lambda \hspace{.1cm} \lim_{n\longrightarrow \infty}D_{n}^{x,\lambda x}(x) = \lambda \hspace{.1cm} \Delta(x)
$$
Hence, $ \Delta $ is homogeneous.
\item[(ii)]  For each $ x \in \mathcal{A} $, there exists a class of derivations $ \{ D_{n}^{x, x^{2}} \} $ such that 
  $$ \Delta(x) = \lim_{n\longrightarrow \infty}D_{n}^{x, x^{2}}(x) \hspace{.25cm} 
\text{ and } \hspace{.25cm}
    \Delta( x^{2}) = \lim_{n\longrightarrow \infty}D_{n}^{x, x^{2}}( x^{2}) 
    $$ 
  so
 \begin{align*}
 \Delta( x^{2})  & = \lim_{n\longrightarrow \infty}D_{n}^{x, x^{2}}( x^{2}) \\
 & =  \hspace{.1cm}\big( \lim_{n\longrightarrow \infty}D_{n}^{x, x^{2}}(x)\big) x +  x \big( \lim_{n\longrightarrow \infty}D_{n}^{x, x^{2}}(x)\big)\\ 
   &  =  \hspace{.1cm} \Delta(x)x + x\Delta(x)  
 \end{align*}
%$$
% \Delta( x^{2})  = \lim_{n\longrightarrow \infty}D_{n}^{x, x^{2}}( x^{2}) 
%      =  \hspace{.1cm}\big( \lim_{n\longrightarrow \infty}D_{n}^{x, x^{2}}(x)\big) x +  x \big( \lim_{n\longrightarrow \infty}D_{n}^{x, x^{2}}(x)\big) 
%       =  \hspace{.1cm} \Delta(x)x + x\Delta(x)  
%$$
\end{itemize}
\end{proof}

%--------------------------------------%

\begin{lemma} % Lemma 2.3
 Any additive 2-local ( or additive approximately 2-local ) derivation $ \Delta $ from a $ C^* $-algebra $ \mathcal{A} $ to
  a Banach $ \mathcal{A} $-bimodule $ E $ is a derivation. 
\end{lemma}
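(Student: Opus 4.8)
The plan is to show that an additive 2-local (or approximately 2-local) derivation $\Delta$ is automatically linear, and then conclude via the Jordan-derivation machinery already recorded in the preliminaries. By Lemma~2.2(i) we already know $\Delta$ is homogeneous over $\mathbb{C}$, and by hypothesis $\Delta$ is additive; hence $\Delta$ is a linear map from $\mathcal{A}$ into $E$. By Lemma~2.2(ii) we have $\Delta(x^2) = \Delta(x)x + x\Delta(x)$ for every $x \in \mathcal{A}$, so $\Delta$ is a Jordan derivation. Now invoke the two bulleted facts from Section~2: every Jordan derivation from a $C^*$-algebra $\mathcal{A}$ into a Banach $\mathcal{A}$-bimodule $E$ is bounded (\cite{Russo}, Corollary~17), and every \emph{bounded} Jordan derivation from a $C^*$-algebra into a Banach bimodule is a derivation (\cite{Johnson2}, Theorem~6.3). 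Chaining these, $\Delta$ is a (bounded) derivation.

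Concretely, I would carry out the steps in this order. First, record that additivity plus Lemma~2.2(i) gives $\mathbb{R}$-linearity and then full $\mathbb{C}$-linearity of $\Delta$. Second, quote Lemma~2.2(ii) to identify $\Delta$ as a Jordan derivation. Third, apply Corollary~17 of \cite{Russo} to get boundedness. Fourth, apply Theorem~6.3 of \cite{Johnson2} to upgrade the bounded Jordan derivation to a derivation. The argument is identical for the 2-local and approximately 2-local cases since Lemma~2.2 was proved for both simultaneously; the only input specific to $\Delta$ being 2-local (rather than merely additive) is Lemma~2.2 itself.

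I do not anticipate a genuine obstacle here: the statement is essentially a bookkeeping corollary once one notices that additivity is exactly the hypothesis missing to turn the "2-local $\Rightarrow$ Jordan derivation" fact into "2-local $\Rightarrow$ derivation." The one point requiring a sentence of care is the passage from additivity to $\mathbb{C}$-linearity: additivity gives $\mathbb{Q}$-linearity for free, but one should cite Lemma~2.2(i), which supplies homogeneity over all of $\mathbb{C}$ directly (in particular no continuity argument is needed to bridge the rationals to the reals). After that, the Jordan-to-derivation step is purely a matter of quoting the stated results, and no estimate or construction is involved.
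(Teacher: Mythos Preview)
Your proposal is correct and follows essentially the same route as the paper: use Lemma~2.2 together with additivity to obtain a linear Jordan derivation, then invoke \cite{Russo}, Corollary~17 for automatic continuity and \cite{Johnson2}, Theorem~6.3 to pass from bounded Jordan derivation to derivation. The paper's proof is more terse but identical in substance.
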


\begin{proof}
By applying Lemma 2.2, we conclude that each additive 2-local ( approximately 2-local ) derivation $ \Delta $ from a Banach algebra 
 $ \mathcal{A} $ to a Banach $ \mathcal{A} $-bimodule $ E $ is linear and satisfies 
 $ \Delta(x^{2}) =  \Delta(x)x + x\Delta(x) $ for any  $ x \in \mathcal{A} $, so $ \Delta $ is a jordan derivation. Now,
  it follows by (\cite{Russo},  Corollary 17) 
%\footnote{ Each jordan derivation of a  $ C^* $-algebra $ \mathcal{A} $ to a Banach $ \mathcal{A} $-bimodule $ E $, is continuous.}
   that $ \Delta $ is continuous. 
  Therefore (\cite{Johnson2}, Theorem 6.3), implies that  the  continuous jordan derivation $ \Delta $ is a derivation. 
\end{proof}

%--------------------------------------%

\begin{definition} % Definition 2.4 
Let $ \mathscr{M} $ be a von Neumann algebra and $ \mathscr{M}_{+} $  the positive cone of $ \mathscr{M} $. A functional $ \tau $ from
  $ \mathscr{M}_{+} $ to $ [0,+\infty] $, is called a trace (on $ \mathscr{M} $) if it satisfies the following conditions:
  \begin{itemize}
  \item[(i)]   $ \tau(a+b)= \tau(a)+\tau(b) $,   $ \hspace{.25cm}   a,b \in  \mathscr{M}_{+} $; 
  \item[(ii)]  $ \tau(\lambda a)=\lambda \tau(a) $,  $ \hspace{.25cm} a \in  \mathscr{M}_{+}, \lambda \geq 0 $ 
  ( and $ 0 \cdot(+\infty) = 0 $);  
  \item[(iii)]  $ \tau(a^{*}a) = \tau(aa^{*}) $,  $ \hspace{.25cm}   a \in  \mathscr{M}_{+} $.
\end{itemize}   
A trace $ \tau $ on  $ \mathscr{M}_{+} $, is 
\begin{itemize}
\item  faithful if $ \tau(a) = 0 $ implies $ a = 0 $;   
\item  finite if $ \tau(a)< +\infty $ ($ a \in \mathscr{M}_{+} $);   
\item  semi-finite if for every non-zero $ a \in \mathscr{M}_{+} $, there exists a non-zero element $ b $ in $ \mathscr{M}_{+} $ with $ \tau(b)< +\infty $ and $ b < a $;   
\item  normal if for every bounded increasing net $ \{a_{\alpha}\} \subset \mathscr{M}_{+} $,
 $ \tau(\sup_{\alpha}  a_{\alpha}) = \sup_{\alpha} \tau(a_{\alpha}) $.  
\end{itemize} 
\end{definition}

%%%%%%%%%%%%%%%%%%%%%%

\begin{theorem} (\cite{Sakai}, Theorem 1.7.8)  % Theorem 2.5 
Let $ \mathscr{M} $ be a von Neumann algebra, then the involution and left and right multiplications on $ \mathscr{M} $ are weak$^*$ continuous.   
 \end{theorem}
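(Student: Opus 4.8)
The plan is to reduce everything to the concrete spatial picture. Represent $\mathscr{M}$ faithfully as a von Neumann algebra on a Hilbert space $\mathcal{H}$, so that $\mathscr{M}\subseteq\mathcal{B}(\mathcal{H})$ is a weakly closed unital $*$-subalgebra, and use the standard facts that the weak$^*$ topology $\sigma(\mathscr{M},\mathscr{M}_*)$ is the restriction to $\mathscr{M}$ of the $\sigma$-weak (ultraweak) topology of $\mathcal{B}(\mathcal{H})$, and that every normal functional $\phi\in\mathscr{M}_*$ is the restriction of a $\sigma$-weakly continuous functional on $\mathcal{B}(\mathcal{H})$, hence has the form $\phi(x)=\sum_{n=1}^{\infty}\langle x\xi_n,\eta_n\rangle$ for sequences $(\xi_n),(\eta_n)$ in $\mathcal{H}$ with $\sum_n\|\xi_n\|^2<\infty$ and $\sum_n\|\eta_n\|^2<\infty$. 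A linear map $T$ on $\mathscr{M}=(\mathscr{M}_*)^*$ is weak$^*$ continuous exactly when $\phi\circ T\in\mathscr{M}_*$ for every $\phi\in\mathscr{M}_*$, and a conjugate-linear map $T$ is weak$^*$ continuous exactly when $x\mapsto\overline{\phi(Tx)}$ lies in $\mathscr{M}_*$ for every $\phi\in\mathscr{M}_*$ (continuity of such a map amounts to preservation of weak$^*$ convergent nets, and $\lambda\mapsto\overline{\lambda}$ is a homeomorphism of $\mathbb{C}$). So it suffices to check that for each $a\in\mathscr{M}$ and each $\phi$ as above, the functionals $x\mapsto\phi(ax)$, $x\mapsto\phi(xa)$ and $x\mapsto\overline{\phi(x^*)}$ are again normal.

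Each of these is a one-line change of vectors. For left multiplication put $\eta_n':=a^*\eta_n$; then $\sum_n\|\eta_n'\|^2\le\|a\|^2\sum_n\|\eta_n\|^2<\infty$ and $\phi(ax)=\sum_n\langle ax\xi_n,\eta_n\rangle=\sum_n\langle x\xi_n,a^*\eta_n\rangle=\sum_n\langle x\xi_n,\eta_n'\rangle$, so $x\mapsto\phi(ax)$ is normal and $L_a$ is weak$^*$ continuous. Symmetrically, with $\xi_n':=a\xi_n$ one gets $\phi(xa)=\sum_n\langle x\xi_n',\eta_n\rangle$, so $R_a$ is weak$^*$ continuous. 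For the involution, $\overline{\phi(x^*)}=\overline{\sum_n\langle x^*\xi_n,\eta_n\rangle}=\sum_n\overline{\langle\xi_n,x\eta_n\rangle}=\sum_n\langle x\eta_n,\xi_n\rangle$, which is precisely the normal functional built from the interchanged sequences $(\eta_n),(\xi_n)$; hence $x\mapsto\overline{\phi(x^*)}$ lies in $\mathscr{M}_*$ for every $\phi\in\mathscr{M}_*$, and the involution is weak$^*$ continuous.

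The point that genuinely needs care, and which I expect to be the main obstacle, is the structural input invoked at the outset: that $\mathscr{M}_*$ is precisely the space of $\sigma$-weakly continuous functionals on $\mathscr{M}$, that each of these extends to a $\sigma$-weakly continuous functional on the ambient $\mathcal{B}(\mathcal{H})$ and is therefore given by a series of the above form, and that the predual is unique so that the term weak$^*$ is unambiguous. Granting this, the three assertions are the routine computations above. A representation-free variant avoids the spatial picture altogether: one checks that $\mathscr{M}_*$ is a norm-closed $\mathscr{M}$-subbimodule of $\mathscr{M}^*$ under the dual actions $(a\cdot\phi)(x)=\phi(xa)$ and $(\phi\cdot a)(x)=\phi(ax)$, which handles $L_a$ and $R_a$, and that $\mathscr{M}_*$ is stable under the conjugate-linear map $\phi\mapsto\phi^*$ given by $\phi^*(x):=\overline{\phi(x^*)}$, since $\mathscr{M}_*$ is the linear span of the normal states of $\mathscr{M}$ and $\phi^*=\phi$ whenever $\phi$ is a state; weak$^*$ continuity of all three maps is then immediate from the duality criterion for weak$^*$ continuity of (conjugate-)linear maps on a dual Banach space.
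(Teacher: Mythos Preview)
Your argument is correct and is essentially the standard proof one finds in textbooks on operator algebras. Note, however, that the paper does not give its own proof of this theorem: it is stated with a bare citation to Sakai's book (\cite{Sakai}, Theorem~1.7.8) and used as a black box. So there is no ``paper's proof'' to compare against beyond that reference; what you have written is a self-contained justification of the cited fact, and the computations with the vector-state representation of normal functionals, as well as the representation-free variant via stability of $\mathscr{M}_*$ under the module actions and under $\phi\mapsto\phi^*$, are both valid and standard routes to the result.
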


%%%%%%%%%%%%%%%%%%%

\begin{theorem}  % Theorem 2.6 
Let $ \mathscr{M} $ be a von Neumann algebra and $ \mathscr{M}_{*} $ its predual \footnote{ The set of normal linear functionals on von Neumann algebra $ \mathscr{M} $. } and let $ \tau $ be a trace on $ \mathscr{M}_{+} $. 
Let 
$ \mathscr{J}_{\tau} = \{x \in \mathscr{M} | \tau(x^{*} x) < \infty \} $ and $ \mathscr{T}_{\tau} = \mathscr{J}_{\tau}^{2} = \{ xy | x, y \in \mathscr{J}_{\tau} \} $.
Then $ \mathscr{J}_{\tau} $, $ \mathscr{T}_{\tau} $ are  two-sided ideals of $ \mathscr{M} $ and
$$ 
\mathscr{T}_{\tau} = [(\mathscr{T}_{\tau})_{+}] = \{ xy | x, y \in \mathscr{J}_{\tau} \}
 $$ 
$$
 (\mathscr{T}_{\tau})_{+} = \{ a \in \mathscr{M}_{+} | \tau (a) < \infty \}   
 $$  
where $ (\mathscr{T}_{\tau})_{+} = \mathscr{T}_{\tau} \cap \mathscr{M}_{+} $. There exists a unique linear functional  
  $ \widetilde{\tau} $ on $ \mathscr{T}_{\tau} $ which coincides with $ \tau $ on  $ (\mathscr{T}_{\tau})_{+} $, and one has  
   $ \widetilde{\tau}(ax) = \widetilde{\tau}(xa) (a \in \mathscr{T}_{\tau}, x \in \mathscr{M}) $. If $ \tau $ is normal,
   for every $ a \in \mathscr{T}_{\tau} $, the linear functional $ x \mapsto \widetilde{\tau}(ax) $ on $ \mathscr{M} $ 
    is $ \sigma(\mathscr{M}, \mathscr{M}_{*}) $-continuous.    
    Finaly $ \tau $ is semi-finite if and only if $ \mathscr{T}_{\tau} $ is $ \sigma(\mathscr{M}, \mathscr{M}_{*}) $-dense in $ \mathscr{M} $. 
\end{theorem}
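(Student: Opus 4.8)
The plan is to reconstruct Dixmier's description of the trace ideal in three main steps, keeping in mind that the intended content of condition (iii) is the tracial identity $\tau(w^{*}w)=\tau(ww^{*})$ for \emph{all} $w\in\mathscr{M}$.

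\textbf{Step 1: the ideals $\mathscr{J}_{\tau}$, $\mathscr{T}_{\tau}$ and their positive parts.} I would first note that $\tau$ is monotone on $\mathscr{M}_{+}$ (if $0\le b\le c$ then $\tau(c)=\tau(b)+\tau(c-b)\ge\tau(b)$ by additivity), so the parallelogram identity $(x+y)^{*}(x+y)+(x-y)^{*}(x-y)=2x^{*}x+2y^{*}y$ forces $\tau((x+y)^{*}(x+y))\le2\tau(x^{*}x)+2\tau(y^{*}y)$ and $\mathscr{J}_{\tau}$ is a subspace; $(ax)^{*}(ax)\le\|a\|^{2}x^{*}x$ makes it a left ideal, and $\tau(w^{*}w)=\tau(ww^{*})$ makes it self-adjoint, hence two-sided, and then so is $\mathscr{T}_{\tau}=\mathscr{J}_{\tau}^{2}$. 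Applying the polarization identity $4y^{*}x=\sum_{k=0}^{3}i^{k}(x+i^{k}y)^{*}(x+i^{k}y)$ shows that the linear span of $\{xy:x,y\in\mathscr{J}_{\tau}\}$ equals the span of $\{w^{*}w:w\in\mathscr{J}_{\tau}\}$; each $w^{*}w$ is a positive element of $\mathscr{T}_{\tau}$ of finite trace, and conversely $a\in\mathscr{M}_{+}$ with $\tau(a)<\infty$ equals $(a^{1/2})^{*}a^{1/2}$ with $a^{1/2}\in\mathscr{J}_{\tau}$; together with monotonicity (to handle the negative coefficients in a real representation $a=\sum_{j}\lambda_{j}w_{j}^{*}w_{j}$ of a positive element) this gives $(\mathscr{T}_{\tau})_{+}=\{a\in\mathscr{M}_{+}:\tau(a)<\infty\}$ and $\mathscr{T}_{\tau}=[(\mathscr{T}_{\tau})_{+}]$. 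Finally, to see $[(\mathscr{T}_{\tau})_{+}]=\{xy:x,y\in\mathscr{J}_{\tau}\}$, given $\sum_{j}x_{j}y_{j}$ I would put $c=(\sum_{j}x_{j}x_{j}^{*})^{1/2}\in\mathscr{J}_{\tau}$; since $x_{j}x_{j}^{*}\le c^{2}$, Douglas' lemma gives $x_{j}=cd_{j}$ with $d_{j}\in\mathscr{M}$, so $\sum_{j}x_{j}y_{j}=c\big(\sum_{j}d_{j}y_{j}\big)$ with $\sum_{j}d_{j}y_{j}\in\mathscr{J}_{\tau}$.

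\textbf{Step 2: the functional $\widetilde{\tau}$, its trace property, and its normality.} For self-adjoint $s\in\mathscr{T}_{\tau}$ I set $\widetilde{\tau}(s)=\tau(s_{+})-\tau(s_{-})$ (with $s_{\pm}\in(\mathscr{T}_{\tau})_{+}$, since the spectral projections of $s$ lie in $\mathscr{M}$ and $\mathscr{T}_{\tau}$ is an ideal) and extend complex-linearly using the decomposition into self-adjoint parts; additivity of $\tau$ makes this well defined and linear, and $\mathscr{T}_{\tau}=[(\mathscr{T}_{\tau})_{+}]$ gives uniqueness. Applying $\widetilde{\tau}$ to the polarization identity reduces the trace identity to $\tau(w^{*}w)=\tau(ww^{*})$ and yields $\widetilde{\tau}(xy)=\widetilde{\tau}(yx)$ for $x,y\in\mathscr{J}_{\tau}$; writing a general $a\in\mathscr{T}_{\tau}$ as $\sum_{j}y_{j}z_{j}$ ($y_{j},z_{j}\in\mathscr{J}_{\tau}$) and using that $z_{j}x,xy_{j}\in\mathscr{J}_{\tau}$ for $x\in\mathscr{M}$ gives $\widetilde{\tau}(ax)=\sum_{j}\widetilde{\tau}(y_{j}(z_{j}x))=\sum_{j}\widetilde{\tau}((z_{j}x)y_{j})=\widetilde{\tau}(xa)$. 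For normality, fix $a\in(\mathscr{T}_{\tau})_{+}$, so $a^{1/2}\in\mathscr{J}_{\tau}$ and $\omega_{a}(x):=\widetilde{\tau}(a^{1/2}xa^{1/2})$ is a well-defined linear functional on $\mathscr{M}$ (well defined since $a^{1/2}xa^{1/2}\in\mathscr{J}_{\tau}^{2}=\mathscr{T}_{\tau}$), positive (since $\omega_{a}(x^{*}x)=\tau((xa^{1/2})^{*}(xa^{1/2}))\ge0$) and finite ($\omega_{a}(1)=\tau(a)<\infty$); when $x_{\alpha}\uparrow x$ in $\mathscr{M}_{+}$ the net $a^{1/2}x_{\alpha}a^{1/2}$ increases to $a^{1/2}xa^{1/2}$, so normality of $\tau$ gives $\omega_{a}(x)=\sup_{\alpha}\omega_{a}(x_{\alpha})$, whence $\omega_{a}\in\mathscr{M}_{*}$. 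Since $\widetilde{\tau}(ax)=\widetilde{\tau}(a^{1/2}(a^{1/2}x))=\widetilde{\tau}((a^{1/2}x)a^{1/2})=\omega_{a}(x)$ by the $\mathscr{J}_{\tau}$-case of the trace identity, $x\mapsto\widetilde{\tau}(ax)$ is $\sigma(\mathscr{M},\mathscr{M}_{*})$-continuous, and by linearity (using $\mathscr{T}_{\tau}=[(\mathscr{T}_{\tau})_{+}]$) this extends to all $a\in\mathscr{T}_{\tau}$.

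\textbf{Step 3: semi-finiteness $\iff$ density.} The $\sigma(\mathscr{M},\mathscr{M}_{*})$-closure of the two-sided ideal $\mathscr{T}_{\tau}$ is $\mathscr{M}z$ for a central projection $z$. If $\tau$ is semi-finite and $z\neq1$, the projection $1-z\neq0$ contains some non-zero $b\in\mathscr{M}_{+}$ with $\tau(b)<\infty$, hence $b\in(\mathscr{T}_{\tau})_{+}\subset\mathscr{M}z$, so $b=bz$; but $b\le1-z$ gives $bz=0$, a contradiction, so $z=1$ and $\mathscr{T}_{\tau}$ is dense. Conversely, given $0\neq a\in\mathscr{M}_{+}$, I would choose $\varepsilon>0$ with $q:=\chi_{[\varepsilon,\infty)}(a)\neq0$, so $q\le\varepsilon^{-1}a$; since left and right multiplications are weak$^{*}$ continuous (Theorem 2.7), $q\mathscr{T}_{\tau}q$ is $\sigma$-weakly dense in $q\mathscr{M}q\neq0$, so picking a non-zero $x\in q\mathscr{T}_{\tau}q$ and setting $c:=x^{*}x\in(\mathscr{T}_{\tau})_{+}$ gives $0\neq c\le\|c\|q\le\|c\|\varepsilon^{-1}a$, and $b:=\varepsilon\|c\|^{-1}c$ is the required non-zero element with $b\le a$ and $\tau(b)<\infty$.

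The main obstacle I anticipate is the normality step of Step 2: it cannot even be formulated before the trace identity $\widetilde{\tau}(a^{1/2}xa^{1/2})=\widetilde{\tau}(ax)$ is in place, and it rests on the non-elementary fact that every finite normal positive functional on a von Neumann algebra lies in the predual. By contrast, the ideal structure and the identities for $(\mathscr{T}_{\tau})_{+}$ and $\widetilde{\tau}$ are essentially bookkeeping with the parallelogram and polarization identities, and the semi-finiteness equivalence only uses the classification of weak$^{*}$-closed ideals together with spectral calculus.
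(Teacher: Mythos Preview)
Your reconstruction is correct and is precisely the classical Dixmier argument: the parallelogram and polarization identities yield the ideal structure and the description of $(\mathscr{T}_{\tau})_{+}$, the functional $\widetilde{\tau}$ is built from the decomposition $s=s_{+}-s_{-}$, the trace identity is reduced to $\tau(w^{*}w)=\tau(ww^{*})$, normality is obtained via $\omega_{a}(x)=\widetilde{\tau}(a^{1/2}xa^{1/2})$, and the semi-finiteness equivalence goes through the central support of the weak$^{*}$-closure of $\mathscr{T}_{\tau}$.

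There is, however, nothing to compare against: the paper does not give an independent proof of this theorem but simply refers the reader to Propositions~6.5.2--6.5.4 of Li~Bing-Ren's \emph{Introduction to Operator Algebras}. Those propositions carry out essentially the same programme you have sketched, so your proposal is in line with the cited source and hence with the paper's intent.
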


\begin{proof}
See the propositions 6.5.2, 6.5.3, and 6.5.4 of refrence  \cite{Bing}.
\end{proof}

 It follows by theorem 2.6 that if $ \mathscr{M} $ is a von neumann algebra, then each finite trace $ \tau $ on  $ \mathscr{M}_{+} $ 
can be extended to a linear functional $ \widetilde{\tau} $ on $ \mathscr{M} $ such that $ \widetilde{\tau}(ab) = \widetilde{\tau}(ba) $
 ( $  a,b \in \mathscr{M} $ ) and  $ \mathscr{T}_{\tau} = \mathscr{M} $. This extention is called a tracial on $ \mathscr{M} $ and  if in addition $ \widetilde{\tau}(1)=1 $ it is called a tracial state. 
 A von neumann algebra $ \mathscr{M} $ is said to be finite if there is a faithful normal tracial state on $ \mathscr{M} $.

\section{ 2-local derivations}

\begin{definition} % Definition 3.1 
Let $ \tau: \mathcal{A}^{**}_{+} \longrightarrow [0, +\infty ] $ be a semi-finite trace on the second dual of a $ C^* $-algebra $ \mathcal{A} $, then a projection $ p \in \mathcal{A}^{**} $ is called $ \tau $-open if there is an increasing net $ (a_{\alpha}) $ of positive elements
 in $ \mathcal{A} $ with $ \tau(a_{\alpha}) < \infty $, such that $ a_{\alpha} \nearrow p $ in $ \mathcal{A}^{**} $.  
\end{definition}

\begin{theorem} % Theorem 3.2  
 Let $ \mathcal{A} $ be a $ C^* $-algebra, if $ \tau $ is a faithful normal semi-finite trace  on $ \mathcal{A}^{**} $, and  
  $ 1_{ \mathcal{A}^{**} } $ is a $ \tau $-open projection, then each 2-local derivation $ \Delta $ from $ \mathcal{A} $ to $ \mathcal{A}^{**} $ 
  is an inner derivartion.   
\end{theorem}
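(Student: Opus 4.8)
The plan is to reduce the problem to known structure theory in two stages: first show that $\Delta$ is additive (hence a derivation by Lemma 2.4), and then invoke Lemma 2.1 to conclude it is inner. By Lemma 2.2 we already know $\Delta$ is homogeneous and satisfies the Jordan identity $\Delta(x^2)=\Delta(x)x+x\Delta(x)$ for all $x\in\mathcal{A}$, so everything hinges on proving $\Delta(x+y)=\Delta(x)+\Delta(y)$. For a fixed pair $a,b\in\mathcal{A}$ there is an inner derivation $D_{a,b}=\delta_{c}$ for some $c\in\mathcal{A}^{**}$ (by Lemma 2.1 applied to the bounded derivation $D_{a,b}$, since $\mathcal{A}^{**}$ is a von Neumann algebra) with $\Delta(a)=[c,a]$ and $\Delta(b)=[c,b]$. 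The trace is the tool that will let us pin down that the element $c$ can be taken independent of the pair, up to something in the commutant.

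The key device is the faithful normal semi-finite trace $\tau$ together with the $\tau$-openness of $1_{\mathcal{A}^{**}}$. First I would use Definition 3.1: there is an increasing net $(e_\alpha)$ of positive elements of $\mathcal{A}$ with $\tau(e_\alpha)<\infty$ and $e_\alpha\nearrow 1$. Each $e_\alpha$ lies in $(\mathscr{T}_\tau)_+$, so by Theorem 2.6 the functional $x\mapsto\widetilde\tau(e_\alpha x)$ is a normal linear functional on $\mathcal{A}^{**}$, and the tracial property $\widetilde\tau(e_\alpha [c,x])=\widetilde\tau([e_\alpha,x]\,c)$-type manipulations let one express $\widetilde\tau(e_\alpha\,\Delta(x))$ purely in terms of $x$ and $e_\alpha$ — crucially, without reference to the pair-dependent $c$, because the commutator absorbs the ambiguity. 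Concretely, for each pair I would evaluate $\widetilde\tau(e_\alpha\,\Delta(x))$ using $\Delta(x)=[c,x]$ and the trace identity $\widetilde\tau(e_\alpha[c,x])=\widetilde\tau(c[x,e_\alpha])=-\widetilde\tau(c[e_\alpha,x])$; since $c$ is the \emph{same} element that computes both $\Delta(a)$ and $\Delta(b)$, one gets a consistent family of linear functionals $\varphi_\alpha(x):=\widetilde\tau(e_\alpha\,\Delta(x))$ that is genuinely linear in $x$ (additivity of $\widetilde\tau$ and of the commutator in the second slot). Because $\tau$ is faithful and $e_\alpha\nearrow 1$, the family $\{\varphi_\alpha\}$ separates enough of $\mathcal{A}^{**}$ to recover additivity of $\Delta$ itself: $\widetilde\tau(e_\alpha(\Delta(x+y)-\Delta(x)-\Delta(y)))=0$ for all $\alpha$ forces $\Delta(x+y)-\Delta(x)-\Delta(y)=0$ by normality and faithfulness of $\tau$ (an element annihilated by $\widetilde\tau(e_\alpha\,\cdot\,)$ for all $\alpha$, after a polarization/positivity argument, must vanish).

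Once additivity is established, Lemma 2.4 gives that $\Delta$ is a derivation, and Lemma 2.1 upgrades this to an inner derivation, completing the proof. The main obstacle I anticipate is the middle step: justifying that the pair-dependent implementing element $c$ really drops out when one pairs $\Delta(x)$ against $e_\alpha$ via the trace, and then that vanishing of all the functionals $\widetilde\tau(e_\alpha\,\cdot\,)$ on a self-adjoint element is enough to conclude the element is zero. The first part requires care because $\Delta$ is not assumed linear a priori, so one cannot freely manipulate $\Delta$ across sums — one must work one pair at a time and check the resulting functional identities are consistent across overlapping pairs (the standard trick is to test $x$, $y$, and $x+y$ against a common auxiliary element and use that $D_{x+y,\,z}$ agrees with $\Delta$ on $x+y$). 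The second part is where faithfulness and semi-finiteness of $\tau$, encoded precisely in the $\tau$-openness of the unit, do the real work: $\tau$-openness guarantees the net $(e_\alpha)$ exists with finite trace and strongly increases to $1$, so $\widetilde\tau(e_\alpha T)\to 0$ for all $\alpha$ combined with normality forces the normal functional $\widetilde\tau(\,\cdot\,)$-induced pairing to vanish on $T$, and faithfulness then yields $T=0$.
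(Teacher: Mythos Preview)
Your overall architecture matches the paper's: prove additivity of $\Delta$ via a trace argument, then invoke Lemma~2.3 and Lemma~2.1. But two steps in your execution do not go through as written.

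\textbf{First gap: how $c$ actually disappears.} Your claim that the implementing element $c$ ``drops out'' from $\widetilde\tau(e_\alpha[c,x])=\widetilde\tau(c[x,e_\alpha])$ is not justified: the right-hand side still contains $c$, and $c$ depends on the 2-local pair. The paper's mechanism is different and cleaner. One takes the \emph{test element itself} as the second member of the 2-local pair: for $x\in\mathcal A$ and $w\in\mathscr T_\tau\cap\mathcal A$, the derivation $D_{x,w}=[m,\cdot\,]$ satisfies $D_{x,w}(xw)=\Delta(x)w+x\Delta(w)$, and since $xw\in\mathscr T_\tau$ the trace property gives $\widetilde\tau([m,xw])=0$. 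Hence
\[
\widetilde\tau(\Delta(x)\,w)=-\,\widetilde\tau(x\,\Delta(w)).
\]
Now the right-hand side is visibly linear in $x$ because $\Delta(w)$ is a fixed element of $\mathcal A^{**}$, independent of $x$; no appeal to ``the same $c$ for both $a$ and $b$'' is needed. You allude to this (``test $x$, $y$, and $x+y$ against a common auxiliary element'') but your main argument does not use it.

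\textbf{Second gap: testing only against $(e_\alpha)$ is not enough.} Even granting $\widetilde\tau(e_\alpha b)=0$ for every $\alpha$, with $b=\Delta(u+v)-\Delta(u)-\Delta(v)$, this does \emph{not} force $b=0$; for instance in $\mathcal B(\ell^2)$ with the canonical trace and $e_\alpha$ the finite-rank coordinate projections, any bounded operator with zero diagonal satisfies these equations. No polarization rescues this, since $b$ need not be self-adjoint, and even for self-adjoint $b$ the conditions only say the partial traces vanish. The paper instead obtains $\widetilde\tau(bw)=0$ for \emph{all} $w\in\mathscr T_\tau\cap\mathcal A$, then chooses $w=e_\alpha b_\beta$ with $b_\beta\to b^*$ in $\sigma(\mathcal A^{**},\mathcal A^*)$ (using that $\mathscr T_\tau$ is an ideal and the normal continuity of $a\mapsto\widetilde\tau(a\,\cdot)$ from Theorem~2.6) to get $\tau(be_\alpha b^*)=0$; normality then yields $\tau(bb^*)=0$, and faithfulness gives $b=0$. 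The $\tau$-openness of $1$ enters precisely here, to supply the net $(e_\alpha)\subset\mathscr T_\tau\cap\mathcal A_+$ with $e_\alpha\nearrow 1$, not as the sole family of test elements.
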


\begin{proof}  
Let $ \Delta:\mathcal{A}\longrightarrow \mathcal{A}^{**} $ be an 2-local derivation. By Theorem 2.6, $ \tau $ can be extended uniquely to a linear functional $ \widetilde{\tau} $ on $ \mathscr{T}_{\tau} $, such that 
$ \widetilde{\tau}(ax) = \widetilde{\tau}(xa) $ ( $ a \in \mathscr{T}_{\tau}, x \in \mathcal{A}^{**} $ ).
  For each $ x \in \mathcal{A} $ and 
 $ y \in \mathscr{T}_{\tau} \cap \mathcal{A} $ there exists a derivation  $ D_{x,y} $ from  $ \mathcal{A} $ 
to $ \mathcal{A}^{**} $, such that $ \Delta(x) = D_{x,y}(x)$ and 
     $ \Delta(y) = D_{x, y}(y)$.  It follows by Lemma 2.1 that there is an element $ m \in \mathcal{A}^{**} $ such that 
$$
[m, xy] = D_{x, y}(xy) = D_{x, y}(x)y + xD_{x, y}(y), 
$$
so
$$
[m, xy] = D_{x, y}(x)y + xD_{x, y}(y). 
$$ 

Since $ \mathscr{T}_{\tau} $ is an ideal and $ y \in \mathscr{T}_{\tau} \cap \mathcal{A} $, the elements $ axy $, $ xy $, $ xya $
 and $ \Delta(y) $ also belong to $ \mathscr{T}_{\tau} $ and hence by continuity of $ \widetilde{\tau} $ we have
\begin{align*}
\widetilde{\tau}(mxy) &= \widetilde{\tau}((mx)y) = \widetilde{\tau}(y(mx)) \\ 
          &= \widetilde{\tau}((ym)x) = \widetilde{\tau}(x(ym)) \\
         & = \widetilde{\tau}(xym).
\end{align*}
Therefore
$$  
\widetilde{\tau}(D_{x, y}(x)y + xD_{x, y}(y)) = \widetilde{\tau} ([m, xy]) = \widetilde{\tau}(mxy - xym) = 0,
$$
 so 
$$
\widetilde{\tau}(D_{x, y}(x)y) = -\widetilde{\tau}(xD_{x, y}(y)). 
$$
Based the above analysis, the following equation can be obtained 
$$
\widetilde{\tau}(D_{x, y}(x)y) = - \widetilde{\tau}(xD_{x, y}(y)), 
$$
so 
$$
\widetilde{\tau}(\Delta(x)y) = - \widetilde{\tau}(x\Delta(y)). 
$$ 
  For arbitrary $ u,v \in \mathcal{A} $ and $ w \in \mathscr{T}_{\tau} \cap \mathcal{A} $ set $ x = u+v $, $ y=w $. Then
   $ \Delta(w) \in \mathscr{T}_{\tau} $ and
\begin{align*}
\widetilde{\tau}(\Delta(u + v)w) &=  - \widetilde{\tau}((u + v)\Delta(w)) = -\widetilde{\tau} (u\Delta(w)) -\widetilde{\tau} (v\Delta(w)) \\
    & = \widetilde{\tau}(\Delta(u)w) + \widetilde{\tau}(\Delta(v)w) = \widetilde{\tau}((\Delta(u) + \Delta(v))w), 
\end{align*}
so for all $ u, v \in \mathcal{A} $ and $ w \in \mathscr{T}_{\tau} \cap \mathcal{A} $ we have
$$
\widetilde{\tau}((\Delta(u + v) - \Delta(u) - \Delta(v))w) = 0. 
$$
 Put $ b = \Delta(u + v) - \Delta(u) - \Delta(v) $. Then 
\begin{equation*}
 (*) \hspace{1cm} \widetilde{\tau}(bw)=0 \hspace{.5cm} ( w \in \mathscr{T}_{\tau} \cap \mathcal{A} )  
\end{equation*} 
Now by hypothesis, there is a net $ \{ e_{\alpha} \}_{\alpha} $ of elements in 
 $ \mathscr{T}_{\tau} \cap \mathcal{A}_{+} $ such that  $ e_{\alpha} \uparrow 1 $ in $ \mathcal{A}^{**} $, 
  and  there is a net $ \{b_{\beta}\}_{\beta} $ in $ \mathcal{A} $ such that $ b_{\beta}\longmapsto b^{*} $ in $ \sigma(\mathcal{A}^{**}, \mathcal{A}^{*}) $-top.
Then $ \{ e_{\alpha}b_{\beta} \}_{ \beta} \subset \mathscr{T}_{\tau} \cap \mathcal{A} $ for each $ \alpha $. Hence,  $ (*) $
 and theorem 2.6, implies that  
$$ 
 \tau( be_{\alpha}b^{*} ) = \widetilde{\tau}( be_{\alpha}b^{*} ) 
 = w^{*}-\lim_{\beta}\widetilde{\tau}( be_{\alpha}b_{\beta} ) = 0 \hspace{.5cm} ( \text{ for all } \alpha ).
$$   
 Since the trace $ \tau $ is normal and $ be_{\alpha}b^{*} \uparrow bb^{*} $ in $ \mathcal{A}^{**} $, we have 
 $ \tau( be_{\alpha}b^{*} ) \uparrow \tau(bb^{*}) $,
so $ \tau(bb^{*}) = 0 $. Since $ \tau $ is faithful we have $ bb^{*} = 0 $, i.e. $ b = 0 $. Therefore 
$$
\Delta(u + v) = \Delta(u) + \Delta(v), \hspace{.5cm} u, v \in \mathcal{A}.
$$
Thus $ \Delta $ is an additive 2-local derivation. It follows by lemma 2.3 that the linear operator $ \Delta $ is a derivation. 
\end{proof}

  %-----------------------------------------------------------------------------------%

\begin{proposition}  % proposition 3.4
 Let $ C^{*} $-algebra $ \mathcal{A} $ be an ideal in its second dual $ \mathcal{A}^{**} $  
and let $ \tau $ be a normal semi-finite trace on $ \mathcal{A}^{**} $ and let the restriction $ \tau|_{\mathcal{A}} $ on $ \mathcal{A} $ be faithful. Then each 2-local derivation $ \Delta $  on $  \mathcal{A} $ is a derivation.  
\end{proposition}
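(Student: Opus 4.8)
The plan is to repeat the argument used for Theorem 3.2, with the hypothesis that $\mathcal{A}$ is an ideal in $\mathcal{A}^{**}$ taking over the role played there by the $\tau$-openness of $1_{\mathcal{A}^{**}}$. Write $\mathscr{M}=\mathcal{A}^{**}$ and let $\Delta\colon\mathcal{A}\to\mathcal{A}$ be the given 2-local derivation. By Theorem 2.6, $\tau$ extends to a linear functional $\widetilde{\tau}$ on the two-sided ideal $\mathscr{T}_{\tau}$ of $\mathscr{M}$ satisfying $\widetilde{\tau}(ax)=\widetilde{\tau}(xa)$ for $a\in\mathscr{T}_{\tau}$, $x\in\mathscr{M}$, and, since $\tau$ is normal, the map $x\mapsto\widetilde{\tau}(ax)$ is weak$^{*}$ continuous for each $a\in\mathscr{T}_{\tau}$. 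Since $\tau$ is semi-finite, $\mathscr{T}_{\tau}$ is weak$^{*}$ dense in $\mathscr{M}$, and I would use the standard fact that the supremum $q$ of all projections of finite trace equals $1_{\mathscr{M}}$ (otherwise $1-q$ would, by semi-finiteness together with spectral theory, dominate a nonzero projection $p_{0}\le 1-q$ of finite trace; but $p_{0}$ also lies below $q$, forcing $p_{0}=0$); the family of finite-trace projections, directed upward by the Kaplansky parallelogram law, then furnishes an increasing net $(e_{\alpha})$ in $(\mathscr{T}_{\tau})_{+}$ with $e_{\alpha}\uparrow 1_{\mathscr{M}}$. Note that faithfulness of $\tau$ on all of $\mathscr{M}$, which is \emph{not} assumed, plays no part here.

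The first step is to prove that
$$
\widetilde{\tau}(\Delta(x)y)=-\widetilde{\tau}(x\Delta(y)) \qquad (x\in\mathcal{A},\ y\in\mathscr{T}_{\tau}\cap\mathcal{A}).
$$
To see this, pick a derivation $D_{x,y}\colon\mathcal{A}\to\mathcal{A}$ with $\Delta(x)=D_{x,y}(x)$ and $\Delta(y)=D_{x,y}(y)$; regarding it as a derivation into $\mathscr{M}$, Lemma 2.1 gives $m\in\mathscr{M}$ with $D_{x,y}(z)=[m,z]$ for $z\in\mathcal{A}$. As $\mathscr{T}_{\tau}$ is a two-sided ideal and $y\in\mathscr{T}_{\tau}$, the elements $xy$, $\Delta(y)=[m,y]$, $\Delta(x)y$ and $x\Delta(y)$ all lie in $\mathscr{T}_{\tau}$, so the trace property yields $\widetilde{\tau}([m,xy])=\widetilde{\tau}(m(xy))-\widetilde{\tau}((xy)m)=0$; expanding $[m,xy]=D_{x,y}(xy)=\Delta(x)y+x\Delta(y)$ gives the displayed identity. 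Applying it with $x$ replaced successively by $u+v$, $u$ and $v$, for arbitrary $u,v\in\mathcal{A}$ and a fixed $w\in\mathscr{T}_{\tau}\cap\mathcal{A}$, and arguing exactly as in Theorem 3.2, produces $\widetilde{\tau}(bw)=0$ where $b:=\Delta(u+v)-\Delta(u)-\Delta(v)$; crucially $b\in\mathcal{A}$, because $\Delta$ maps into $\mathcal{A}$.

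Here the ideal hypothesis replaces $\tau$-openness: since $b^{*}\in\mathcal{A}$ and $\mathcal{A}$ is an ideal in $\mathscr{M}$, each $e_{\alpha}b^{*}$ lies in $\mathcal{A}$, while it lies in $\mathscr{T}_{\tau}$ because $e_{\alpha}\in\mathscr{T}_{\tau}$; hence $e_{\alpha}b^{*}\in\mathscr{T}_{\tau}\cap\mathcal{A}$ and $\widetilde{\tau}(be_{\alpha}b^{*})=0$ for every $\alpha$. The net $(be_{\alpha}b^{*})$ is increasing and norm-bounded because $z\mapsto bzb^{*}$ is order preserving, and by Theorem 2.5 it converges weak$^{*}$ to $bb^{*}$; normality of $\tau$ then gives $\tau(bb^{*})=\sup_{\alpha}\tau(be_{\alpha}b^{*})=0$. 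Since $bb^{*}\in\mathcal{A}_{+}$ and $\tau|_{\mathcal{A}}$ is faithful, $bb^{*}=0$, so $b=0$; that is, $\Delta$ is additive. Being an additive 2-local derivation on a $C^{*}$-algebra, $\Delta$ is a derivation by Lemma 2.3.

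I expect the only genuinely delicate point to be the construction of the increasing net $(e_{\alpha})\subset(\mathscr{T}_{\tau})_{+}$ with $e_{\alpha}\uparrow 1_{\mathscr{M}}$ from semi-finiteness alone — in particular, keeping track that faithfulness of $\tau$ on all of $\mathscr{M}$ is not available — together with the (routine but necessary) check that $be_{\alpha}b^{*}$ is genuinely monotone so that normality applies. Everything else is a transcription of the proof of Theorem 3.2, the ideal property of $\mathcal{A}$ being used only to keep the products $e_{\alpha}b^{*}$ inside $\mathscr{T}_{\tau}\cap\mathcal{A}$ (which, incidentally, is what lets us dispense with the weak$^{*}$-approximation of $b^{*}$ by elements of $\mathcal{A}$ that was needed in Theorem 3.2).
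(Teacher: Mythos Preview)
Your proof is correct and follows essentially the same route as the paper's: reduce to $\widetilde{\tau}(bw)=0$ for $w\in\mathscr{T}_{\tau}\cap\mathcal{A}$, use the ideal property of $\mathcal{A}$ to place $e_{\alpha}b^{*}$ in $\mathscr{T}_{\tau}\cap\mathcal{A}$, and conclude via normality of $\tau$ and faithfulness of $\tau|_{\mathcal{A}}$. The only cosmetic differences are that the paper cites Sakai's Theorem~2.5.6 for the existence of the net $(e_{\alpha})$ rather than constructing it from semi-finiteness as you do, and invokes Sakai's Corollary~4.1.7 in place of Lemma~2.1 for the innerness of $D_{x,y}$.
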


\begin{proof} 
Let $ \Delta:\mathcal{A}\longrightarrow \mathcal{A} $ be a 2-local derivation. For each $ x \in \mathcal{A} $ and 
 $ y \in \mathscr{T}_{\tau} \cap \mathcal{A} $ there is a derivation $ D_{x,y} $ on $ \mathcal{A} $ such that $ \Delta(x) = D_{x,y}(x) $ and 
  $ \Delta(y) = D_{x,y}(y) $. It follows by ( \cite{Sakai}, Corollary 4.1.7 ) that, there is an element 
$ a \in \mathcal{A}^{**} $ such that
$$
[a, xy] = D_{x, y}(xy) = D_{x, y}(x)y + xD_{x, y}(y).  
$$
Same as theorem 3.2, it can be shown that  
$$
\widetilde{\tau}((\Delta(u + v) - \Delta(u) - \Delta(v))w) = 0,
$$
for all $ u, v \in \mathcal{A} $ and $ w \in \mathscr{T}_{\tau} \cap \mathcal{A} $ . Put $ b = \Delta(u + v) - \Delta(u) - \Delta(v) $, Then
 $$ 
 \widetilde{\tau}(bw)=0 \hspace{.5cm} ( w \in \mathscr{T}_{\tau} \cap \mathcal{A} )
 $$
 Now take an increasing net $ \{ e_{\alpha} \}_{\alpha} $ of projections in
 $ \mathscr{T}_{\tau} $ such that  $ e_{\alpha} \uparrow 1 $ in $ \mathcal{A}^{**} $ ( see \cite{Sakai}, Theorem 2.5.6. ). 
Since $ \mathcal{A}$ and $ \mathscr{T}_{\tau} $ are ideals of $ \mathcal{A}^{**} $ and $ b^{*} \in \mathcal{A} $, we have
 $ \{ e_{\alpha}b^{*} \}_{\alpha} \subset \mathscr{T}_{\tau} \cap \mathcal{A} $. 
Hence  
$$
\tau( be_{\alpha}b^{*} ) = \widetilde{\tau}( be_{\alpha}b^{*} ) = 0, \hspace{.5cm} ( \text{ for all } \alpha ).
$$ 
At the same time $ be_{\alpha}b^{*} \uparrow bb^{*} $ in $ \mathcal{A}^{**} $. Since  $ \tau $ is normal, we have 
 $ \tau( be_{\alpha}b^{*} ) \uparrow \tau(bb^{*}) $ and so
 $ \tau(bb^{*}) = 0 $. Since the restriction $ \tau|_{\mathcal{A}} $ is faithful, we have $ bb^{*} = 0 $ and so $ b = 0 $. Therefore
$$ 
\Delta(u + v) = \Delta(u) + \Delta(v), \hspace{.5cm} ( u, v \in \mathcal{A} ).
$$
Thus $ \Delta $ is an additive 2-local derivation. It follows by lemma 2.3 that the linear operator 
$ \Delta $ is a derivation. 
\end{proof}

\section{Approximately 2-Local Derivations}

\begin{lemma}  % Lemma 4.1 
Let $ \tau $ be a faithful normal tracial state on the second dual $ \mathcal{A}^{**} $ of a $ C^* $-algebra $ \mathcal{A} $ and let $ b\in \mathcal{A}^{**} $ be arbitrary. 
Then $ \tau(xb) = 0 $ ( $ x \in \mathcal{A} $ ), implies that $ b = 0 $.  
\end{lemma}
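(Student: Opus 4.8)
The plan is to combine the weak$^*$ density of $\mathcal{A}$ in $\mathcal{A}^{**}$ with the normality (weak$^*$ continuity) of the tracial state, and then to invoke faithfulness.

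First I would observe that the functional $\varphi\colon\mathcal{A}^{**}\to\mathbb{C}$ defined by $\varphi(y)=\tau(by)$ is weak$^*$ continuous. Indeed, $\tau$ is a finite normal trace on the von Neumann algebra $\mathcal{A}^{**}$, so by Theorem 2.6 we have $\mathscr{T}_{\tau}=\mathcal{A}^{**}$, and for every $a\in\mathscr{T}_{\tau}=\mathcal{A}^{**}$ the linear functional $x\mapsto\tau(ax)$ is $\sigma(\mathcal{A}^{**},(\mathcal{A}^{**})_{*})$-continuous; taking $a=b$ gives the claim. Here the $\sigma$-weak topology $\sigma(\mathcal{A}^{**},(\mathcal{A}^{**})_{*})$ coincides with the weak$^*$ topology $\sigma(\mathcal{A}^{**},\mathcal{A}^{*})$, since the predual of $\mathcal{A}^{**}$ is precisely $\mathcal{A}^{*}$.

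Next I would use Goldstine's theorem: the closed unit ball of $\mathcal{A}$ is weak$^*$ dense in the closed unit ball of $\mathcal{A}^{**}$, so $\mathcal{A}$ is weak$^*$ dense in $\mathcal{A}^{**}$. Since $\varphi$ is weak$^*$ continuous and, by hypothesis, vanishes on $\mathcal{A}$, it follows that $\varphi\equiv 0$ on $\mathcal{A}^{**}$; that is, $\tau(by)=0$ for every $y\in\mathcal{A}^{**}$.

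Finally, specialising to $y=b^{*}$ yields $\tau(bb^{*})=0$. Since $bb^{*}\in\mathcal{A}^{**}_{+}$ and $\tau$ is faithful, we conclude $bb^{*}=0$, whence $\|b\|^{2}=\|bb^{*}\|=0$ and $b=0$. I do not expect a genuine obstacle here; the only delicate point is justifying the weak$^*$ continuity of $y\mapsto\tau(by)$ on all of $\mathcal{A}^{**}$ (rather than merely on bounded sets), which is exactly the content of the last assertion of Theorem 2.6 once the predual of $\mathcal{A}^{**}$ is identified with $\mathcal{A}^{*}$.
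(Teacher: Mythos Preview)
Your proposal is correct and follows essentially the same approach as the paper: use the weak$^*$ density of $\mathcal{A}$ in $\mathcal{A}^{**}$ together with the weak$^*$ continuity of $y\mapsto\tau(by)$ (the paper approximates $b^{*}$ by a net $(b_{\alpha})\subset\mathcal{A}$ and writes $\tau(b^{*}b)=\lim_{\alpha}\tau(b_{\alpha}b)=0$), and then invoke faithfulness. Your write-up is simply more explicit about justifying the continuity via Theorem~2.6 and about identifying $(\mathcal{A}^{**})_{*}$ with $\mathcal{A}^{*}$; note also that in passing from the hypothesis $\tau(xb)=0$ to $\varphi(x)=\tau(bx)=0$ you are tacitly using the tracial property, which is harmless but worth stating.
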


\begin{proof}
Let $ b\in \mathcal{A}^{**} $ then there is a net $ (b_{\alpha}) $ in $ \mathcal{A} $ such that $ b_{\alpha}$ converges to $ b^{*} $ in 
$ \sigma(\mathcal{A}^{**}, \mathcal{A}^{*}) $-top, and so
$$
\tau(b^{*}b) =w^{*}-\lim_{\alpha}\tau(b_{\alpha}b) = 0. 
$$
Since $ \tau $ is faithful, we have $ b = 0 $.  
\end{proof}

 %----------------------------------%  

\begin{theorem}  % Theorem 4.2
Let the second dual $ \mathcal{A}^{**} $ of a $ C^* $-algebra $ \mathcal{A} $ be finite as a von neumann algebra.
 Then each approximately 2-local derivation $ \Delta $ from $ \mathcal{A} $ to $  \mathcal{A}^{**} $, is a derivation.   
\end{theorem}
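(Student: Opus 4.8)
The plan is to follow the scheme of the proof of Theorem 3.2, but to exploit the finiteness of $\mathcal{A}^{**}$ so that no $\tau$-open projection is needed. Since $\mathcal{A}^{**}$ is finite there is a faithful normal tracial state $\tau$ on it, and by Theorem 2.6 together with the remark following it, $\tau$ extends uniquely to a linear functional $\widetilde{\tau}$ on all of $\mathcal{A}^{**}$ — here $\mathscr{T}_{\tau}=\mathcal{A}^{**}$ precisely because $\tau$ is finite — satisfying $\widetilde{\tau}(ab)=\widetilde{\tau}(ba)$ for all $a,b\in\mathcal{A}^{**}$. Moreover $\widetilde{\tau}$ is itself a state, hence a bounded (indeed norm-one) linear functional; this boundedness is what will allow $\widetilde{\tau}$ to be commuted past the norm-convergent sequences supplied by the approximately 2-local hypothesis.

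First I would derive the basic trace identity. Fix $x,y\in\mathcal{A}$ and pick bounded derivations $D^{n}_{x,y}\colon\mathcal{A}\to\mathcal{A}^{**}$ with $\Delta(x)=\lim_{n}D^{n}_{x,y}(x)$ and $\Delta(y)=\lim_{n}D^{n}_{x,y}(y)$. By Lemma 2.1 each $D^{n}_{x,y}$ is inner, say $D^{n}_{x,y}(z)=[m_{n},z]$ with $m_{n}\in\mathcal{A}^{**}$, so applying the derivation law on $\mathcal{A}$ to $xy$ gives $[m_{n},xy]=D^{n}_{x,y}(x)\,y+x\,D^{n}_{x,y}(y)$. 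Applying $\widetilde{\tau}$ to both sides and using traciality to kill the commutator yields $\widetilde{\tau}(D^{n}_{x,y}(x)\,y)=-\widetilde{\tau}(x\,D^{n}_{x,y}(y))$. Letting $n\to\infty$, and using that $\widetilde{\tau}$ is norm-continuous while $z\mapsto zy$ and $z\mapsto xz$ are norm-continuous on $\mathcal{A}^{**}$, we obtain $\widetilde{\tau}(\Delta(x)\,y)=-\widetilde{\tau}(x\,\Delta(y))$ for all $x,y\in\mathcal{A}$.

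Next, exactly as in the proof of Theorem 3.2, for $u,v,w\in\mathcal{A}$ put $x=u+v$ and $y=w$ and expand both sides using the identity just proved; this gives $\widetilde{\tau}\big((\Delta(u+v)-\Delta(u)-\Delta(v))w\big)=0$ for all $w\in\mathcal{A}$. Set $b=\Delta(u+v)-\Delta(u)-\Delta(v)\in\mathcal{A}^{**}$; by traciality $\tau(wb)=\widetilde{\tau}(bw)=0$ for every $w\in\mathcal{A}$, so Lemma 4.1 forces $b=0$, i.e.\ $\Delta$ is additive. Lemma 2.3 then shows that the additive approximately 2-local derivation $\Delta$ is a derivation.

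The argument is essentially a transcription of the finite-trace bookkeeping already carried out for Theorem 3.2, so the one genuinely new point — and the step I would be most careful about — is the interchange of $\widetilde{\tau}$ with the limit. It rests on the observation that the extension $\widetilde{\tau}$ of the tracial \emph{state} $\tau$ is automatically bounded, together with the fact that the limits in the definition of an approximately 2-local derivation are norm limits. No appeal to normality of $\tau$ or to $\tau$-openness of $1_{\mathcal{A}^{**}}$ is needed, since finiteness already makes $\widetilde{\tau}$ globally defined and tracial on $\mathcal{A}^{**}$.
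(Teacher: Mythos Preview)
Your proof is correct and follows essentially the same route as the paper: use a faithful normal tracial state on $\mathcal{A}^{**}$, exploit innerness of each $D^{n}_{x,y}$ and traciality to get $\widetilde{\tau}(\Delta(x)y)=-\widetilde{\tau}(x\Delta(y))$, deduce additivity via Lemma~4.1, and finish with Lemma~2.3. Your write-up is in fact a bit more explicit than the paper's about why the limit interchange is legitimate (boundedness of the tracial state) and about the traciality step needed to match the hypothesis of Lemma~4.1.
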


\begin{proof}
%We prove theorem only for approximately 2-local derivations.  
Let $ \Delta $ from $ \mathcal{A} $ to $  \mathcal{A}^{**} $ is an approximately 2-local derivation and let
  $ \tau:\mathcal{A}^{**}\longrightarrow C $ be a faithful normal tracial state. For each $ x, y \in \mathcal{A} $ there exists a sequence of derivations  $\{D^{n}_{x,y}\} $ from  $ \mathcal{A} $  
to $ \mathcal{A}^{**} $, such that $ \Delta(x) = \lim_{n\longrightarrow\infty} D^{n}_{x,y}(x)$ and 
     $ \Delta(y) = \lim_{n\longrightarrow\infty} D_{x, y}^{n}(y)$.  It follows by Lemma 2.1  that $ D^{n}_{x, y} $ is inner ( for each
      $ n\in \mathbb{N} $ ), so there exists an element $ m \in \mathcal{A}^{**} $ such that 
$$
[m, xy] = D_{n}^{x, y}(xy) = D_{n}^{x, y}(x)y + xD_{n}^{x, y}(y),  
$$  
so
$$
[m, xy] = D_{n}^{x, y}(x)y + xD_{n}^{x, y}(y). 
$$
Based on that $ \tau $ is continuous  
\begin{align*}
\tau(mxy) &= \tau((mx)y) = \tau(y(mx)) \\
          &= \tau((ym)x) = \tau(x(ym)) \\
         & = \tau(xym).
\end{align*}
Therefore
$$
\tau(D_{n}^{x, y}(x)y + xD_{n}^{x, y}(y)) = \tau ([m, xy]) = \tau(mxy - xym) = 0,   
$$
so
$$
\tau(D_{n}^{x, y}(x)y) = -\tau(xD_{n}^{x, y}(y)). 
$$
Based the above analysis, the following equation can be obtained
$$
 \lim_{n\longrightarrow +\infty}\tau(D_{n}^{x, y}(x)y) = - \lim_{n\longrightarrow +\infty}\tau(xD_{n}^{x, y}(y)), 
$$
so
$$
\tau(\Delta(x)y) = - \tau(x\Delta(y)). 
$$

For arbitrary $u, v, w \in \mathcal{A} $, set $x = u + v$, $y = w$. Then from above we obtain
\begin{align*}
\tau(\Delta(u + v)w) &= -\tau((u + v)\Delta(w)) \\
                   & = -\tau(u\Delta(w)) - \tau(v\Delta(w)) \\
                   & = \tau(\Delta(u)w) + \tau(\Delta(v)w) \\
                   & = \tau((\Delta(u) + \Delta(v))w). 
\end{align*}  
Hence
\begin{equation*}
\tau((\Delta(u + v) - \Delta(u) - \Delta(v))w) = 0.
\end{equation*}
For all $u, v, w \in \mathcal{A}$, and by Lemma 4.1,  we have
 $$ 
 \Delta(u + v) - \Delta(u) - \Delta(v) = 0, 
 $$
 so
\begin{equation*} 
\Delta(u + v) = \Delta(u) +  \Delta(v). 
\end{equation*}
By the analysis, we obtain $ \Delta $  is an additive mapping. 
 Hence, 
$ \Delta $ is an additive 2-local derivation, and  lemma 2.3 implies that $ \Delta $ is a derivation. 
 %The same proof holds for 2-local derivations.   
\end{proof}

\begin{problem} 
Does Theorem 3.2 hold for approximately 2-local derivations?  
\end{problem}

%-----------------------------------------------------%
%-----------------------------------------------------% 
%-----------------------------------------------------%

% ------------------------------------------------------------------------%
\end{document}